\documentclass[11pt]{article}

\usepackage{amsmath,amssymb,amsfonts,amsthm,mathtools}
\usepackage{enumitem}
\usepackage{microtype}
\usepackage[hidelinks]{hyperref}
\usepackage[nameinlink,capitalize]{cleveref}

\usepackage{aliascnt}

\allowdisplaybreaks
\newtheorem{theorem}{Theorem}[section]

\newaliascnt{proposition}{theorem}
\newtheorem{proposition}[proposition]{Proposition}
\aliascntresetthe{proposition}

\newaliascnt{lemma}{theorem}
\newtheorem{lemma}[lemma]{Lemma}
\aliascntresetthe{lemma}

\newaliascnt{corollary}{theorem}
\newtheorem{corollary}[corollary]{Corollary}
\aliascntresetthe{corollary}

\newaliascnt{algorithm}{theorem}
\newtheorem{algorithm}[algorithm]{Algorithm}
\aliascntresetthe{algorithm}

\theoremstyle{definition}

\newaliascnt{definition}{theorem}
\newtheorem{definition}[definition]{Definition}
\aliascntresetthe{definition}

\newaliascnt{example}{theorem}
\newtheorem{example}[example]{Example}
\aliascntresetthe{example}

\theoremstyle{remark}

\newaliascnt{remark}{theorem}
\newtheorem{remark}[remark]{Remark}
\aliascntresetthe{remark}

\crefname{theorem}{theorem}{theorems}
\Crefname{theorem}{Theorem}{Theorems}

\crefname{proposition}{proposition}{propositions}
\Crefname{proposition}{Proposition}{Propositions}

\crefname{lemma}{lemma}{lemmas}
\Crefname{lemma}{Lemma}{Lemmas}

\crefname{corollary}{corollary}{corollaries}
\Crefname{corollary}{Corollary}{Corollaries}

\crefname{algorithm}{algorithm}{algorithms}
\Crefname{algorithm}{Algorithm}{Algorithms}

\crefname{definition}{definition}{definitions}
\Crefname{definition}{Definition}{Definitions}

\crefname{example}{example}{examples}
\Crefname{example}{Example}{Examples}

\crefname{remark}{remark}{remarks}
\Crefname{remark}{Remark}{Remarks}

\newcommand{\CC}{\mathbb{C}}
\newcommand{\ZZ}{\mathbb{Z}}
\newcommand{\NN}{\mathbb{N}}
\newcommand{\supp}{\operatorname{supp}}
\newcommand{\nsupp}{\operatorname{nsupp}}
\newcommand{\inw}{\operatorname{in}_{w}}
\newcommand{\Hilb}{\operatorname{Hilb}}
\newcommand{\link}{\operatorname{link}}
\newcommand{\frank}{\operatorname{frank}}
\newcommand{\fdeg}{\operatorname{fdeg}}

\newcommand{\rank}{\operatorname{rank}}
\newcommand{\mfrak}{\mathfrak{m}}

\usepackage{bm}

\numberwithin{equation}{section}

\title{Hilbert Series and Logarithmic Degrees of
$A$-Hypergeometric Series}

\author{Mao Nagamine
\thanks{This work was supported by JST SPRING, Grant Number JPMJSP2119.}}
\date{}

\begin{document}
\maketitle
\begin{abstract}
Fix a generic weight vector and a fake exponent of a homogeneous
$A$-hypergeometric system. Using all corresponding standard pairs,
including embedded ones, we construct an Artinian quotient of the
Stanley--Reisner ring of the link of the negative support. Its Hilbert
series gives the graded dimensions of the orthogonal complement of the
local fake indicial ideal and, under the Okuyama--Saito Frobenius
condition, those of the leading logarithmic coefficient space of actual
series solutions. The construction requires no Cohen--Macaulay
hypothesis. When a top-dimensional standard pair occurs and the link is
Cohen--Macaulay, the Hilbert series specializes to the $h$-polynomial
of the link.\par

\textit{Key Words and Phrases.} $A$-hypergeometric system, logarithmic series, fake exponent, standard pair, Stanley--Reisner ring, Cohen--Macaulay ring, Hilbert series\par
2020 \textit{Mathematics Subject Classification.} Primary 33C70; Secondary 13F55, 13H10, 14M25.
\end{abstract}

\section{Introduction}

The theory of $A$-hypergeometric systems, initiated by Gel'fand, Kapranov,
and Zelevinsky, relates systems of partial differential equations to toric
ideals, regular triangulations, and combinatorial commutative algebra
\cite{GKZbook,SST}.  Let
$$
A=(a_1,\ldots,a_n)\in\ZZ^{d\times n}
$$
be of rank $d$.  We assume throughout that the columns of $A$ lie in an
affine hyperplane of $\mathbb{Q}^d$ not containing the origin.  For a
parameter $\beta\in\CC^d$, the associated $A$-hypergeometric system is
generated by the toric operators and the Euler operators.  For generic
parameters, its local series solutions are logarithm-free, whereas resonant
parameters may produce logarithmic solutions. Saito obtained a dimension formula for the space of logarithm-free
canonical series solutions in terms of regular triangulations
\cite{Saito2002}.

Saito developed a Frobenius method in which a fake exponent is
perturbed in lattice directions. Okuyama and Saito subsequently refined
this construction: under an explicit ideal-theoretic condition, all series
solutions with a fixed exponent are obtained by applying constant-coefficient
differential operators to a perturbed series
\cite{SaitoLog,OkuyamaSaitoII}.  The
resulting logarithmic coefficients are polynomials in linear forms
$\log x^b$, $b\in\ker_{\ZZ}(A)$.

Saito, Sturmfels, and Takayama used Hilbert-series methods and the
fake indicial ideal to study the holonomic rank. In particular, when
$$
\mathbb{C}[\partial]/\operatorname{in}_{w}(I_A)
$$
is Cohen--Macaulay, their argument yields
$$
\operatorname{rank}(H_A(\beta))
=
\operatorname{vol}(A)
$$
\cite[Section~4.1 and Theorem~4.1.5]{SST}.
This is a global rank formula under a Cohen--Macaulay hypothesis.
The present paper studies the finer contribution attached to each fixed
fake exponent. We construct an Artinian quotient whose Hilbert series
computes the graded dimensions of the orthogonal complement of the
corresponding local fake indicial ideal. This construction remains valid
when $\mathbb{C}[\operatorname{link}_v]$ is not Cohen--Macaulay and when
the fake exponent is supported only on embedded standard pairs.

The purpose of this paper is to determine the graded dimensions of the
leading logarithmic polynomial space without explicitly constructing all
series. Fix a generic weight vector $w$ and a fake exponent $v$, and write
$$
I_0=\operatorname{nsupp}(v).
$$
The standard pairs of $\operatorname{in}_{w}(I_A)$ corresponding to $v$
determine a simplicial complex $\Delta_w(v)$. Since every facet of this
complex contains $I_0$, we consider the link
$$
\operatorname{link}_v
=
\operatorname{link}_{\Delta_w(v)}(I_0).
$$
All corresponding standard pairs, including embedded ones, are used in
this construction.

The main construction is a graded isomorphism
$$
T/Q_v
\cong
\CC[\link_v]/(\ell_1,\ldots,\ell_{d-|I_0|}),
$$
where $Q_v$ is the homogeneous local fake indicial ideal and the $\ell_i$ are
obtained by eliminating the variables indexed by $I_0$ from the Euler
relations.  The quotient on the right is always Artinian.  Consequently, its
Hilbert series
$$
H_{w,v}(t)
=
\Hilb\left(
\CC[\link_v]/(\ell_1,\ldots,\ell_{d-|I_0|}),t
\right)
$$
is a polynomial and satisfies
$$
H_{w,v}(t)
=
\sum_{q\geq0}\dim_{\CC}(Q_v^{\perp})_q t^q.
$$
This formula holds for every fake exponent, even when it corresponds only to
embedded standard pairs.

If the Okuyama--Saito condition
$$
P=m(s)P_B
$$
holds for a $\ZZ$-basis $B$ of $\ker_{\ZZ}(A)$, then $v$ is an exponent and
$Q_v^{\perp}$ is degree-preservingly identified with the space $C_v$ of
leading logarithmic polynomials of actual series solutions.  Hence
$$
H_{w,v}(t)
=
\sum_{q\geq0}\dim_{\CC}C_v(q)t^q.
$$

The familiar $h$-polynomial appears as a specialization.  If at least one
standard pair corresponding to $v$ is top-dimensional and
$\CC[\link_v]$ is Cohen--Macaulay, then the above linear forms form a regular
sequence and
$$
H_{w,v}(t)=h_{\link_v}(t).
$$
The top-dimensionality hypothesis is essential: for an exponent supported
only on an embedded standard pair, the number $d-|I_0|$ of Euler forms can be
strictly larger than $\dim\CC[\link_v]$.  The Cohen--Macaulay hypothesis is
also essential for the $h$-polynomial identity; we give an explicit
non-Cohen--Macaulay example satisfying the Frobenius condition for which
$H_{w,v}(t)\neq h_{\link_v}(t)$.

This paper is organized as follows.  Section~\ref{sec:preliminaries} recalls
$A$-hypergeometric systems, fake exponents, standard pairs, and the
Okuyama--Saito Frobenius input.  Section~\ref{sec:simplicial} introduces the
simplicial complex attached to a fake exponent and the local homogeneous
ideal.  Section~\ref{sec:srmodel} proves the Stanley--Reisner realization and
the general Artinian Hilbert-series theorem.  Section~\ref{sec:cmcase}
derives the Cohen--Macaulay and shellable specializations.  Section
\ref{sec:examples} gives examples exhibiting the embedded, non-pure,
Cohen--Macaulay, and non-Cohen--Macaulay cases.

\section{$A$-hypergeometric systems and Frobenius data}
\label{sec:preliminaries}

\subsection{Systems and fake exponents}

Let $\NN=\{0,1,2,\ldots\}$ and write
$$
\partial^u=\partial_1^{u_1}\cdots\partial_n^{u_n}
$$
for $u\in\NN^n$.  The toric ideal associated with $A$ is
$$
I_A
=
\langle
\partial^u-\partial^{u'}
\mid
Au=Au',\ u,u'\in\NN^n
\rangle
\subseteq
\CC[\partial_1,\ldots,\partial_n].
$$
Let
$$
D
=
\CC\langle x_1,\ldots,x_n,\partial_1,\ldots,\partial_n\rangle
$$
be the $n$-th Weyl algebra and set
$$
\vartheta_j=x_j\partial_j.
$$
For $\beta=(\beta_1,\ldots,\beta_d)^T\in\CC^d$, define
$$
H_A(\beta)
=
D I_A
+
D\left\langle
\sum_{j=1}^n a_{ij}\vartheta_j-\beta_i
\ \middle|\
 i=1,\ldots,d
\right\rangle.
$$
The left ideal $H_A(\beta)$ defines the $A$-hypergeometric system
with parameter $\beta$, and
$$
M_A(\beta)=D/H_A(\beta)
$$
is the associated $A$-hypergeometric $D$-module. Under the homogeneity
assumption on $A$, the $D$-module $M_A(\beta)$ is regular holonomic
\cite{Hotta,SchulzeWalther}.

Set
$$
L=\ker_{\ZZ}(A)=\{u\in\ZZ^n\mid Au=0\}.
$$
Fix a generic weight vector $w\in\mathbb{R}^n$.  The fake indicial ideal is
$$
\operatorname{find}_{w}(H_A(\beta))
=
\left(
D\cdot\operatorname{in}_{w}(I_A)
\cap
\mathbb{C}[\vartheta]
\right)
+
\langle A\vartheta-\beta\rangle.
$$
A zero $v\in\CC^n$ of this ideal is called a fake exponent.  It is called an
exponent in the direction $w$ if there is an $A$-hypergeometric series
$$
\phi=x^v\sum_{u\in L}g_u(\log x)x^u
$$
in the direction $w$ with $g_0\neq0$.

For $z\in\CC^n$, define its negative support by
$$
\nsupp(z)=\{j\in[n]\mid z_j\in\ZZ_{<0}\}.
$$
For a fixed fake exponent $v$ and $u\in L$, write
$$
I_u=\nsupp(v+u),
\quad
I_0=\nsupp(v).
$$

\subsection{Standard pairs}

Let $M\subseteq\CC[\partial]$ be a monomial ideal.  A standard pair of $M$ is
a pair $(a,\sigma)$ with $a\in\NN^n$ and $\sigma\subseteq[n]$ satisfying:
\begin{enumerate}[label=(\roman*)]
\item $a_i=0$ for every $i\in\sigma$;
\item $\partial^a\prod_{j\in\sigma}\partial_j^{b_j}\notin M$ for all
$b_j\in\NN$;
\item for every $\ell\notin\sigma$, some monomial obtained by adjoining a
positive power of $\partial_\ell$ lies in $M$.
\end{enumerate}
We denote the set of standard pairs by $\mathcal{S}(M)$.  A standard pair is
called top-dimensional when $|\sigma|=d$; the remaining standard pairs are
embedded.

The standard-pair description of fake exponents gives the following
criterion \cite[Corollary~3.2.3]{SST}.  A vector $v$ is a fake exponent of
$H_A(\beta)$ with respect to $w$ if and only if $Av=\beta$ and there is
$(a,\sigma)\in\mathcal{S}(\inw(I_A))$ such that
$$
v_j=a_j
\quad
(j\notin\sigma).
$$
For a fixed fake exponent $v$, put
$$
\mathcal{S}_w(v)
=
\left\{
(a,\sigma)\in\mathcal{S}(\inw(I_A))
\ \middle|\
 v_j=a_j\text{ for every }j\notin\sigma
\right\}.
$$
All standard pairs in $\mathcal{S}_w(v)$, including embedded ones, will be
used below.

Let
$$
\mathcal{G}
=
\left\{
\partial^{g_+^{(i)}}-\partial^{g_-^{(i)}}
\ \middle|\
 i=1,\ldots,m
\right\}
$$
be the reduced Gr\"obner basis of $I_A$ with respect to $w$, with
$\partial^{g_+^{(i)}}\in\inw(I_A)$.  Put
$$
g^{(i)}=g_+^{(i)}-g_-^{(i)}
$$
and
$$
G^{(i)}=I_{-g^{(i)}}\setminus I_0.
$$

\subsection{Frobenius ideals and leading logarithmic polynomials}

We recall only the part of the Frobenius construction needed in this paper.
Let $B=\{b^{(1)},\ldots,b^{(h)}\}$ be a $\ZZ$-basis of $L$, where
$h=n-d$, and write
$$
(Bs)_j=\sum_{k=1}^h b_j^{(k)}s_k.
$$
For $F\subseteq[n]$, set
$$
(Bs)^F=\prod_{j\in F}(Bs)_j.
$$
Define the affine semigroup generated by the reduced Gr\"obner basis
vectors by
$$
C(w)=\sum_{i=1}^m\NN g^{(i)}.
$$
Following Saito \cite{SaitoLog} and Okuyama--Saito \cite{OkuyamaSaitoII}, let
$$
NS_w(v)
=
\left\{
I_u
\;\middle|\;
u\in L,\ 
u'\in C(w)
\text{ for every }u'\in L\text{ such that }I_{u'}=I_u
\right\},
$$
let $\operatorname{NS}_w^c(v)$ be its complement among the negative supports
$I_u$, and put
$$
K=\bigcap_{I\in\operatorname{NS}_w(v)}I.
$$
The Frobenius ideal and its Gr\"obner-basis approximation are
$$
P
=
\left\langle
(Bs)^{I\cup J\setminus K}
\ \middle|\
I\in\operatorname{NS}_w(v),\ J\in\operatorname{NS}_w^c(v)
\right\rangle,
$$
$$
m(s)=(Bs)^{I_0\setminus K},
$$
and
$$
P_B
=
\left\langle
(Bs)^{G^{(i)}}
\ \middle|\
 i=1,\ldots,m
\right\rangle.
$$
Okuyama and Saito proved
$$
m(s)P_B\subseteq P\subseteq P_B
$$
and showed that the equality
\begin{equation}
P=m(s)P_B
\label{eq:frobenius-condition}
\end{equation}
is sufficient for the Frobenius construction to produce the full space of
series solutions with exponent $v$ \cite[Proposition 3.11 and Theorem~4.4]{OkuyamaSaitoII}.

For an exponent $v$, let $C_v$ denote the vector space of logarithmic
polynomials occurring as the coefficient of the starting monomial $x^v$ in
series solutions with exponent $v$ in the direction $w$.  Write
$$
C_v(q)=C_v\cap\CC[\log x_1,\ldots,\log x_n]_q.
$$
Thus
$$
C_v=\bigoplus_{q\geq0}C_v(q).
$$
Define
$$
\lambda_B:\CC[\partial_{s_1},\ldots,\partial_{s_h}]
\longrightarrow
\CC[\log x_1,\ldots,\log x_n]
$$
by
$$
\lambda_B(q(\partial_s))=q((\log x)B).
$$
In particular,
$$
\lambda_B(\partial_{s_k})
=
\sum_{j=1}^n b_j^{(k)}\log x_j.
$$

\section{The simplicial complex and the local fake indicial ideal}
\label{sec:simplicial}

\subsection{The complex attached to a fake exponent}

\begin{lemma}
\label{lem:I0-in-support}
For every $(a,\sigma)\in\mathcal{S}_w(v)$, one has $I_0\subseteq\sigma$.
\end{lemma}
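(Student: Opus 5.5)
The argument is a direct contrapositive. Fix $(a,\sigma)\in\mathcal{S}_w(v)$ and an index $j\in I_0=\nsupp(v)$; we must show $j\in\sigma$. Suppose instead that $j\notin\sigma$. By the definition of $\mathcal{S}_w(v)$, the coordinates of $v$ outside $\sigma$ are forced:
$$
v_j=a_j \quad (j\notin\sigma).
$$
Since $(a,\sigma)$ is a standard pair of the monomial ideal $\inw(I_A)$, we have $a\in\NN^n$, so $a_j\in\NN$. Hence $v_j$ would be a nonnegative integer, which contradicts $j\in\nsupp(v)$, that is, $v_j\in\ZZ_{<0}$. Therefore $j\in\sigma$, and since $j\in I_0$ was arbitrary, $I_0\subseteq\sigma$.

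The only points to check are that the entries of $a$ really lie in $\NN$, which holds by the definition of standard pairs given above, and that $\ZZ_{<0}\cap\NN=\emptyset$. The argument does not use condition (i), $a_i=0$ for $i\in\sigma$, and it does not matter whether the standard pair is top-dimensional or embedded. So the statement holds uniformly for every pair in $\mathcal{S}_w(v)$, including the embedded ones. I expect no real obstacle; the proof is essentially a one-line consequence of the standard-pair criterion for fake exponents \cite[Corollary~3.2.3]{SST} as encoded in the definition of $\mathcal{S}_w(v)$.
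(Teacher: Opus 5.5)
Your proposal is correct and matches the paper's proof: if $j\notin\sigma$, then $v_j=a_j\in\NN$, so $v_j$ is not a negative integer and $j\notin I_0$. Your remark that the argument applies equally to embedded standard pairs is also right.
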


\begin{proof}
If $j\notin\sigma$, then $v_j=a_j\in\NN$.  Hence $v_j$ is not a negative
integer, and therefore $j\notin I_0$.
\end{proof}

\begin{definition}
\label{def:delta-link}
Define
$$
\Delta_w(v)
=
\left\{
\tau\subseteq[n]
\ \middle|\
\tau\subseteq\sigma\text{ for some }(a,\sigma)\in\mathcal{S}_w(v)
\right\}.
$$
The link attached to $v$ is
$$
\link_v
=
\link_{\Delta_w(v)}(I_0)
=
\left\{
\tau\in\Delta_w(v)
\ \middle|\
\tau\cap I_0=\emptyset,\ \tau\cup I_0\in\Delta_w(v)
\right\}.
$$
\end{definition}

The facets of $\Delta_w(v)$ are the inclusion-maximal supports $\sigma$
occurring in $\mathcal{S}_w(v)$, and the facets of $\link_v$ are the sets
$\sigma\setminus I_0$ obtained from those maximal supports.

Let
$$
V=[n]\setminus I_0
$$
and define the squarefree monomial ideal
$$
J_w(v)
=
\left\langle
\prod_{j\in G^{(i)}}x_j
\ \middle|\
 i=1,\ldots,m
\right\rangle
\subseteq
\CC[x_j\mid j\in V].
$$

\subsection{Stanley--Reisner and Hilbert-series facts}

For a simplicial complex $\Delta$ on a vertex set $V$, let
$$
I_\Delta
=
\left\langle
\prod_{i\in\tau}x_i
\ \middle|\
\tau\notin\Delta
\right\rangle
$$
and
$$
\CC[\Delta]=\CC[x_i\mid i\in V]/I_\Delta.
$$
If $\dim\Delta=r-1$, its Hilbert series has the form
$$
\Hilb(\CC[\Delta],t)=\frac{h_\Delta(t)}{(1-t)^r},
$$
where $h_\Delta(t)$ is the $h$-polynomial.  Equivalently, if
$f_j(\Delta)$ denotes the number of $j$-dimensional faces and
$f_{-1}(\Delta)=1$, then
$$
h_\Delta(t)
=
\sum_{j=-1}^{r-1}f_j(\Delta)t^{j+1}(1-t)^{r-1-j}.
$$

We use the following standard facts from combinatorial commutative algebra
\cite{MillerSturmfels}.  If $\CC[\Delta]$ is Cohen--Macaulay,
every homogeneous system of parameters is a regular sequence.  If
$\ell_1,\ldots,\ell_r$ is a regular sequence of linear forms on
$\CC[\Delta]$, then
\begin{equation}
\Hilb\left(\CC[\Delta]/(\ell_1,\ldots,\ell_r),t\right)
=h_\Delta(t).
\label{eq:h-artinian}
\end{equation}
A pure shellable complex is Cohen--Macaulay.  We will also use the elementary
one-dimensional criterion below.

\begin{proposition}
\label{prop:graph-cm}
Let $\Delta$ be a one-dimensional simplicial complex.  If $\Delta$ is
connected and has no isolated vertices, then $\CC[\Delta]$ is
Cohen--Macaulay.
\end{proposition}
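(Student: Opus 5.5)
We prove \cref{prop:graph-cm} by the depth criterion. Write $R=\CC[\Delta]$, $\dim R=2$, and exhibit a regular sequence of length two consisting of linear forms. Since $\Delta$ has no isolated vertices, every facet is an edge, so $\Delta$ is pure of dimension one. Purity gives that all minimal primes of $I_\Delta$, namely $P_F=\langle x_i\mid i\notin F\rangle$ for the facets $F$, have the same dimension. As $I_\Delta$ is radical, the associated primes of $R$ are exactly these minimal primes.

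\textbf{First element.} Let $\theta_1=\sum_{i\in V}x_i$. For each edge $F=\{p,q\}$, the image of $\theta_1$ in $R/P_F\cong\CC[x_p,x_q]$ is $x_p+x_q\neq0$. Thus $\theta_1$ avoids every associated prime and is a nonzerodivisor on $R$.

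\textbf{Second element.} Choose scalars $c_i\in\CC$ that are pairwise distinct, and put $\theta_2=\sum_i c_ix_i$. We must show $\theta_2$ is a nonzerodivisor on $R/\theta_1R$. It is equivalent, and more convenient, to verify Serre's condition $S_2$ or Reisner's criterion. Reisner's criterion for a one-dimensional complex asks for two things:
\begin{enumerate}[label=(\alph*)]
\item $\tilde H_0(\Delta)=0$, which holds because $\Delta$ is connected;
\item $\tilde H_{-1}(\link_\Delta(v))=0$ for every vertex $v$, which holds because each link is a nonempty set of points, as $v$ is not isolated.
\end{enumerate}
The $\tilde H_1(\Delta)$ term is irrelevant, since it lies in the top dimension. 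Hence $R$ is Cohen--Macaulay.

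\textbf{Main obstacle.} Reisner's criterion may count as a standard fact from \cite{MillerSturmfels}. If a self-contained argument is preferred, use $\operatorname{depth}R\ge2$ if and only if $H^0_{\mfrak}(R)=H^1_{\mfrak}(R)=0$, and compute these by Hochster's formula. In degree $0$ of the $\ZZ^V$-grading, $H^1_{\mfrak}(R)_0\cong\tilde H^0(\Delta)$, which vanishes by connectedness. In negative degrees with support a vertex $v$, $H^1_{\mfrak}(R)$ is $\tilde H^{-1}(\link v)$, which vanishes when $v$ is not isolated. For degrees supported on an edge, the link is empty and only $H^2_{\mfrak}$ contributes. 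Finally, $H^0_{\mfrak}(R)=0$ because $R$ is reduced and has no embedded component at $\mfrak$, given $\dim\Delta\ge0$.

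The only real care needed is this degree bookkeeping in Hochster's formula. Alternatively, one can argue directly that $R/\theta_1R$ has depth at least one: its associated primes correspond to the connected-components obstruction, and connectedness rules out $\mfrak$ being associated.
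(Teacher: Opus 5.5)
Your proposal is correct and, once the scaffolding is set aside, it is the paper's proof: Reisner's criterion, with connectedness killing $\tilde H_0(\Delta)$ at the empty face, non-isolatedness making each vertex link a nonempty set of points, and edge links (which are $\{\emptyset\}$, not empty) imposing no condition. The regular-sequence setup with $\theta_1,\theta_2$ is never actually used in reaching the conclusion, and your Hochster-formula computation is just the local-cohomology form of the same criterion.
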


\begin{proof}
This follows immediately from Reisner's criterion: connectedness gives the
vanishing of reduced homology in degree zero for the empty face, the link of
each vertex is a nonempty zero-dimensional complex, and the link of each edge
is $\{\emptyset\}$.
\end{proof}

\subsection{The differential pairing}

Set
$$
T=\CC[\theta_1,\ldots,\theta_n],
\quad
U=\CC[z_1,\ldots,z_n].
$$
For $\phi\in T$ and $f\in U$, define
$$
\langle\phi,f\rangle
=
\left.\phi(\partial_z)\bullet f(z)\right|_{z=0}.
$$
For a homogeneous ideal $Q\subseteq T$, define
$$
Q^{\perp}
=
\{f\in U\mid \phi(\partial_z)\bullet f=0\text{ for all }\phi\in Q\}.
$$

\begin{proposition}
\label{prop:pairing-hilbert}
For every homogeneous ideal $Q\subseteq T$,
$$
\dim_{\CC}(Q^{\perp})_q
=
\dim_{\CC}(T/Q)_q
$$
for every $q\geq0$.  Consequently,
$$
\sum_{q\geq0}\dim_{\CC}(Q^{\perp})_q t^q
=
\Hilb(T/Q,t).
$$
\end{proposition}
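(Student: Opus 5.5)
The plan is to use the apolarity (Macaulay inverse system) pairing degree by degree. First I check that $\langle\cdot,\cdot\rangle$ restricts to a perfect pairing $T_q\times U_q\to\CC$ for each $q\geq0$. On monomials, $\langle\theta^\alpha,z^\gamma\rangle=\alpha!\,\delta_{\alpha\gamma}$, so the monomial bases of $T_q$ and $U_q$ are dual up to the nonzero scalars $\alpha!$. Moreover $\langle T_p,U_q\rangle=0$ for $p\neq q$, because the evaluation at $z=0$ kills every term of positive degree and $\partial_z^\alpha$ kills polynomials of degree below $|\alpha|$.

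Next I show that $Q^\perp$ is a graded subspace. Since $Q$ is homogeneous, $Q$ is spanned by homogeneous elements. If $\phi$ is homogeneous of degree $p$ and $f=\sum_q f_q$, then $\phi(\partial_z)\bullet f_q$ is homogeneous of degree $q-p$. These pieces lie in different degrees, so $\phi(\partial_z)\bullet f=0$ forces $\phi(\partial_z)\bullet f_q=0$ for every $q$. Hence $Q^\perp=\bigoplus_q (Q^\perp)_q$.

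The key step is the identification
\[
(Q^\perp)_q=\{f\in U_q\mid \langle\psi,f\rangle=0\ \text{for all }\psi\in Q_q\}=(Q_q)^{\perp}\subseteq U_q ,
\]
where the right-hand side is the annihilator with respect to the perfect pairing.

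For $\subseteq$: take $f\in(Q^\perp)_q$ and $\psi\in Q_q$. Then $\psi(\partial_z)\bullet f=0$, so evaluating at $0$ gives $\langle\psi,f\rangle=0$.

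For $\supseteq$: take $f\in U_q$ with $\langle Q_q,f\rangle=0$, and a homogeneous $\phi\in Q_p$. If $p>q$, then $\phi(\partial_z)\bullet f=0$ trivially. If $p\leq q$, the polynomial $g=\phi(\partial_z)\bullet f$ is homogeneous of degree $q-p$. For each monomial $\theta^\alpha$ with $|\alpha|=q-p$, the product $\theta^\alpha\phi$ lies in $Q_q$ because $Q$ is an ideal. Since the operators $\partial_z^\alpha$ and $\phi(\partial_z)$ commute,
\[
\langle\theta^\alpha,g\rangle=\langle\theta^\alpha\phi,f\rangle=0 .
\]
All coefficients of $g$ are therefore zero, so $g=0$. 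The ideal property, used in exactly this way, is the one point that needs care; everything else is routine.

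Finally, by nondegeneracy of the pairing in finite dimension,
\[
\dim_{\CC}(Q_q)^\perp=\dim_{\CC} U_q-\dim_{\CC} Q_q=\dim_{\CC} T_q-\dim_{\CC} Q_q=\dim_{\CC}(T/Q)_q .
\]
Summing over $q$ gives the Hilbert series identity.
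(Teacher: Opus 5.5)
Your proof is correct and follows the same route as the paper: a perfect pairing on $T_q\times U_q$ via monomial bases that are dual up to factorials, the identification of $(Q^\perp)_q$ with the annihilator of $Q\cap T_q$ in $U_q$, and a dimension count. Your justification of that identification, which multiplies by monomials $\theta^\alpha$ of degree $q-p$ and uses that $Q$ is an ideal, spells out a step the paper compresses into ``since $Q$ is homogeneous''; your check that $Q^\perp$ is graded is a harmless addition.
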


\begin{proof}
The monomial bases of $T_q$ and $U_q$ are dual up to nonzero factorials, hence
the pairing $T_q\times U_q\to\CC$ is perfect.  Since $Q$ is homogeneous,
$(Q^{\perp})_q$ is the orthogonal complement of $Q\cap T_q$ in $U_q$.
Therefore
$$
\dim_{\CC}(Q^{\perp})_q
=
\dim_{\CC}T_q-\dim_{\CC}(Q\cap T_q)
=
\dim_{\CC}(T/Q)_q.
$$
\end{proof}

To describe the local fake indicial ideal at $v$, distinguish the original
Euler variables $\vartheta_j$ from the centered variables
$$
\theta_j=\vartheta_j-v_j.
$$
Define
$$
M_\theta(v)
=
\left\langle
\prod_{j\in G^{(i)}}\theta_j
\ \middle|\
 i=1,\ldots,m
\right\rangle
$$
and
$$
Q_v=\langle A\theta\rangle+M_\theta(v)\subseteq T.
$$
The space $Q_v^{\perp}$ is the space of homogeneous logarithmic polynomials
satisfying the local fake indicial equation at $v$.

Let
$$
\Phi_B:T\longrightarrow\CC[s_1,\ldots,s_h]
$$
be given by $\Phi_B(\theta_j)=(Bs)_j$.  Then $P_B=\Phi_B(Q_v)$, and because
$B$ is a basis of $L$, $\Phi_B$ induces a degree-preserving isomorphism
$$
T/Q_v\cong\CC[s]/P_B
$$
and a dual degree-preserving isomorphism between $Q_v^{\perp}$ and
$P_B^{\perp}$ \cite[Proposition~3.4 and Theorem~3.14]{OkuyamaSaitoII}.

\begin{theorem}
\label{thm:frobenius-leading-space}
Assume that $B$ is a $\ZZ$-basis of $L$ and that the Frobenius condition
\eqref{eq:frobenius-condition} holds.  Then $v$ is an exponent and
$$
C_v=\lambda_B(P_B^{\perp})\cong Q_v^{\perp}
$$
as graded vector spaces.  In particular,
$$
\dim_{\CC}C_v(q)=\dim_{\CC}(Q_v^{\perp})_q
$$
for all $q\geq0$.
\end{theorem}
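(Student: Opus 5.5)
The plan is to take the analytic content from Okuyama--Saito and reduce the statement to bookkeeping with the isomorphism $\Phi_B$ recalled just above the theorem.

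First, invoke \cite[Theorem~4.4]{OkuyamaSaitoII}. Under the Frobenius condition \eqref{eq:frobenius-condition}, it says the series solutions with exponent $v$ are exactly
$$
\phi_q=q(\partial_s)\bullet F(x,s)\big|_{s=0},
\qquad q(\partial_s)\in P^{\perp}.
$$
Here $F(x,s)$ is the perturbed series $x^{v+Bs}\sum_{u}c_u(s)x^u$, normalized by $c_0(s)=m(s)$ (or a version divided by $m(s)$, depending on the normalization). I would take $P=m(s)P_B$ and transfer the description to $P_B^{\perp}$: the operators $q\in P_B^{\perp}$ correspond to the Frobenius derivatives of $F/m(s)$. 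Put differently, the leading coefficient of $x^{v+Bs}$ becomes $1$ after the division, and the relevant space of differential operators is $P_B^{\perp}$.

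Second, compute the leading coefficient. Expand
$$
x^{v+Bs}=x^v\exp\Bigl(\sum_k s_k\,(\log x\, b^{(k)})\Bigr).
$$
Applying $q(\partial_s)$ and setting $s=0$, the coefficient of $x^v$ (the $u=0$ term) is exactly $q\bigl((\log x)B\bigr)=\lambda_B(q)$. Terms with $u\neq0$ contribute only to other monomials $x^{v+u}$, so they do not affect the leading coefficient. This gives $C_v=\lambda_B(P_B^{\perp})$.

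Third, show $v$ is an exponent. Since $1\in P_B^{\perp}$ (because $P_B$ is generated in positive degree, or $P_B$ is proper), the series $\phi_1$ has $g_0=1\neq0$.

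Fourth, check that $\lambda_B$ is injective and graded. It is homogeneous of degree $0$. It is injective because $B$ has rank $h$, so the linear forms $\sum_j b^{(k)}_j\log x_j$ are linearly independent and $\lambda_B$ is an injective algebra map. Hence $C_v(q)\cong (P_B^{\perp})_q$.

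Finally, compose with the degree-preserving isomorphism $Q_v^{\perp}\cong P_B^{\perp}$ recalled before the theorem. This yields $\dim C_v(q)=\dim(Q_v^{\perp})_q$.

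The main obstacle is the first step: matching the exact normalization in \cite{OkuyamaSaitoII}, specifically whether the operators range over $P^{\perp}$ applied to $F$ or over $P_B^{\perp}$ applied to $F/m(s)$. I would resolve it via the Leibniz rule, using $P=m(s)P_B$ to convert between the two descriptions. I would also verify that the leading term of $F$ contains no extra factor that could kill components in the leading $x^v$ coefficient.
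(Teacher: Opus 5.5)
Your proposal is correct and is essentially the paper's argument: the Okuyama--Saito family indexed by $P^{\perp}$, the identification via $P=m(s)P_B$ of the operators governing the $x^v$-coefficient with $P_B^{\perp}$, the injective graded map $\lambda_B$, and the $\Phi_B$-duality $P_B^{\perp}\cong Q_v^{\perp}$; your leading-coefficient computation and the observation $1\in P_B^{\perp}$ spell out steps the paper leaves implicit. For the transfer step, state that $q\mapsto\bigl(g\mapsto q(\partial_s)\bullet(m(s)g(s))|_{s=0}\bigr)$ maps $P^{\perp}$ \emph{onto} $P_B^{\perp}$ (dual to the injectivity of multiplication by $m(s)\neq0$); this surjectivity gives $\lambda_B(P_B^{\perp})\subseteq C_v$, and it is safer than arguing with $F/m(s)$, whose other coefficients may have poles at $s=0$.
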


\begin{proof}
\cite{OkuyamaSaitoII} gives a spanning family of all series solutions
with exponent $v$ from operators in $P^{\perp}$.  Under
$P=m(s)P_B$, the operators occurring in the coefficient of the starting
monomial identify with $P_B^{\perp}$.  The map $\lambda_B$ converts these
operators into logarithmic polynomials.  The final graded isomorphism follows
from the duality induced by $\Phi_B$.
\end{proof}

\section{The Artinian Stanley--Reisner model}
\label{sec:srmodel}

\subsection{The local monomial ideal}

\begin{lemma}
\label{lem:local-standard-pair-decomposition}
Let $M=\inw(I_A)$.  Then
$$
M_\theta(v)
=
\bigcap_{(a,\sigma)\in\mathcal{S}_w(v)}
\langle\theta_j\mid j\notin\sigma\rangle.
$$
\end{lemma}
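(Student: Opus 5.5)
The plan is to compare the two sides after localizing at the point $\vartheta=v$, using the standard-pair primary decomposition of the fake indicial ideal. I would use two classical facts about a monomial ideal $M\subseteq\CC[\partial]$ from \cite[Section~3.2]{SST}:
$$
D\cdot M\cap\CC[\vartheta]
=
\bigcap_{(a,\sigma)\in\mathcal{S}(M)}
\langle\vartheta_j-a_j\mid j\notin\sigma\rangle,
$$
and $D\cdot M\cap\CC[\vartheta]$ is generated by the falling factorials
$$
[\vartheta]_u=\prod_{j}\prod_{k=0}^{u_j-1}(\vartheta_j-k),
$$
where $\partial^u$ runs over the minimal monomial generators of $M$. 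For $M=\inw(I_A)$, these minimal generators are exactly the leading monomials $\partial^{g_+^{(i)}}$ of the reduced Gr\"obner basis $\mathcal{G}$. Write $N=D\cdot\inw(I_A)\cap\CC[\vartheta]$. I would localize $N$ at the maximal ideal $\mfrak_v=\langle\vartheta_j-v_j\rangle$ and compute $N_{\mfrak_v}$ in two ways.

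First, from the primary decomposition. Localization commutes with finite intersections. The component $\langle\vartheta_j-a_j\mid j\notin\sigma\rangle$ becomes the unit ideal unless it vanishes at $v$, that is, unless $v_j=a_j$ for all $j\notin\sigma$, which means $(a,\sigma)\in\mathcal{S}_w(v)$. In the centered variables $\theta_j=\vartheta_j-v_j$, the surviving components are $\langle\theta_j\mid j\notin\sigma\rangle$. Hence $N_{\mfrak_v}$ is the localization of $\bigcap_{(a,\sigma)\in\mathcal{S}_w(v)}\langle\theta_j\mid j\notin\sigma\rangle$.

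Second, from the generators. In $[\vartheta]_{g_+^{(i)}}$, the factor $\vartheta_j-k$ is a unit at $v$ unless $v_j=k$ for some $0\le k<g^{(i)}_{+,j}$. This condition says $v_j\in\NN$ and $v_j-g^{(i)}_{+,j}<0$. Since $v_j\geq0$, it forces $g^{(i)}_{+,j}>0$, hence $g^{(i)}_{-,j}=0$ and $g^{(i)}_j=g^{(i)}_{+,j}$. So the condition is equivalent to $v_j-g^{(i)}_j\in\ZZ_{<0}$ with $v_j\notin\ZZ_{<0}$, i.e.\ $j\in\nsupp(v-g^{(i)})\setminus I_0=G^{(i)}$. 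For such $j$, exactly one value of $k$ occurs, contributing the factor $\theta_j$ times a unit. Therefore $[\vartheta]_{g_+^{(i)}}$ equals a unit times $\prod_{j\in G^{(i)}}\theta_j$ in the local ring. This gives $N_{\mfrak_v}=M_\theta(v)_{\mfrak_v}$.

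Finally, both $M_\theta(v)$ and the intersection are squarefree monomial ideals in $\theta$. All their associated primes are generated by subsets of the $\theta_j$, so they lie inside $\mfrak_v=\langle\theta_1,\ldots,\theta_n\rangle$. Such an ideal is recovered as the contraction of its localization at $\mfrak_v$. Equality after localization therefore gives the claimed equality in $T$. Alternatively, one can phrase this combinatorially: both sides consist of the $\theta^F$ with $F\notin\Delta_w(v)$.

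The main obstacle is the second computation. It requires the falling-factorial generation of $D\cdot M\cap\CC[\vartheta]$ by the minimal generators, and the careful check that the surviving factors are exactly indexed by $G^{(i)}$. The sign analysis showing that $g^{(i)}_{+,j}>0$ is forced uses the fact that $v_j\in\NN$ for $j\notin I_0$ whenever $\nsupp(v-g^{(i)})$ contains $j$.
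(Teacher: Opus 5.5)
Your proposal is correct and follows essentially the same route as the paper: localize the distraction $D\cdot\inw(I_A)\cap\CC[\vartheta]$ at $\mfrak_v$, compute it once through the falling-factorial generators $[\vartheta]_{g_+^{(i)}}$ and once through the standard-pair decomposition, then recover the equality in $T$ from the local one. You also spell out two details that the paper states only briefly: the check that the surviving factors are indexed exactly by $G^{(i)}$ (which implicitly uses that $g_+^{(i)}$ and $g_-^{(i)}$ have disjoint supports), and the associated-primes justification for passing back from the localization.
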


\begin{proof}
Let $R=\CC[\vartheta_1,\ldots,\vartheta_n]$ and let
$$
\widetilde{M}=D\cdot M\cap R
$$
be the distraction of $M$.  Put
$$
\mfrak_v
=
\langle\vartheta_1-v_1,\ldots,\vartheta_n-v_n\rangle
=
\langle\theta_1,\ldots,\theta_n\rangle.
$$
Since the initial monomials in the reduced Gr\"obner basis minimally generate
$M$, the distraction is generated by
$$
[\vartheta]_{g_+^{(i)}}
=
\prod_{j=1}^n\prod_{\nu=0}^{g_{+,j}^{(i)}-1}(\vartheta_j-\nu),
\quad i=1,\ldots,m.
$$
In $R_{\mfrak_v}$, a factor $\vartheta_j-\nu$ is a unit unless $\nu=v_j$.
The nonunit factors at $v$ are precisely $\vartheta_j-v_j=\theta_j$ with
$j\in G^{(i)}$.  Hence
$$
\widetilde{M}R_{\mfrak_v}=M_\theta(v)R_{\mfrak_v}.
$$

On the other hand, the standard-pair decomposition of the distraction \cite[Theorem~3.2.2 and Corollary~3.2.3]{SST} gives
$$
\widetilde{M}
=
\bigcap_{(a,\sigma)\in\mathcal{S}(M)}
\langle\vartheta_j-a_j\mid j\notin\sigma\rangle.
$$
After localization at $\mfrak_v$, a component becomes the unit ideal unless
$v_j=a_j$ for every $j\notin\sigma$.  Thus only the pairs in
$\mathcal{S}_w(v)$ remain, and their localized components are
$$
\langle\vartheta_j-v_j\mid j\notin\sigma\rangle
=
\langle\theta_j\mid j\notin\sigma\rangle.
$$
Both ideals are homogeneous in the centered variables, and a homogeneous
ideal is recovered from its localization at the homogeneous maximal ideal.
\end{proof}

\begin{theorem}
\label{thm:link-main-properties}
For every fake exponent $v$, the following statements hold.
\begin{enumerate}[label=(\roman*)]
\item The columns of $A$ indexed by $I_0$ are linearly independent.
\item Under $\theta_j\mapsto x_j$ for $j\in V$,
$$
J_w(v)=I_{\link_v}.
$$
Consequently, the inclusion-minimal sets among
$G^{(1)},\ldots,G^{(m)}$ are precisely the minimal nonfaces of $\link_v$.
\item
$$
\max_{\tau\in\link_v}|\tau|
=
\max_{(a,\sigma)\in\mathcal{S}_w(v)}(|\sigma|-|I_0|)
\leq d-|I_0|.
$$
\item If $\mathcal{S}_w(v)$ contains a top-dimensional standard pair, then
$$
\dim\link_v+1=d-|I_0|.
$$
\end{enumerate}
\end{theorem}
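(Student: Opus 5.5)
The plan is to derive all four parts from \cref{lem:local-standard-pair-decomposition} and the standard fact that for a standard pair $(a,\sigma)$ of $\inw(I_A)$ the columns $\{a_j\}_{j\in\sigma}$ are linearly independent \cite[Section~3.2]{SST}. This independence holds because $\sigma$ is a face of the regular triangulation $\Delta_w$: the radical of $\inw(I_A)$ is the Stanley--Reisner ideal of $\Delta_w$, and supports of standard pairs are faces of it. Part (i) then follows at once: pick any $(a,\sigma)\in\mathcal{S}_w(v)$, which exists because $v$ is a fake exponent. By \cref{lem:I0-in-support} we have $I_0\subseteq\sigma$, and a subset of a linearly independent set of columns is independent.

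For (ii), I would compare two descriptions of $M_\theta(v)$. By definition it is generated by the squarefree monomials $\prod_{j\in G^{(i)}}\theta_j$. By \cref{lem:local-standard-pair-decomposition} it equals $\bigcap_{(a,\sigma)\in\mathcal{S}_w(v)}\langle\theta_j\mid j\notin\sigma\rangle$, which is the Stanley--Reisner ideal of $\Delta_w(v)$ on $[n]$. So $\prod_{j\in F}\theta_j\in M_\theta(v)$ iff $F\not\subseteq\sigma$ for all such $\sigma$.

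Since $G^{(i)}\cap I_0=\emptyset$, all generators live in variables indexed by $V$. For $F\subseteq V$, $F\notin\link_v$ iff $F\cup I_0\notin\Delta_w(v)$. This holds iff $F\not\subseteq\sigma\setminus I_0$ for every $\sigma$, using $I_0\subseteq\sigma$. Hence a squarefree monomial $x^F$ with $F\subseteq V$ lies in $J_w(v)$ iff $\theta^F\in M_\theta(v)$ iff $F\notin\link_v$. For this step I must check that intersecting $M_\theta(v)$ with the subring in the $V$-variables is generated by the $G^{(i)}$-monomials. That holds because all generators are already in those variables and the ideals are monomial. Both $J_w(v)$ and $I_{\link_v}$ are squarefree monomial ideals containing the same squarefree monomials, so they are equal. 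The minimal generators of a squarefree monomial ideal are the inclusion-minimal supports, and for $I_{\link_v}$ these are the minimal nonfaces, which gives the consequence. The main subtlety I expect is the bookkeeping here: making sure no $G^{(i)}$ is empty and that the vertex set of $\link_v$ is taken inside $V$. If some $G^{(i)}=\emptyset$, the ideal would be the unit ideal, contradicting that $v$ is a fake exponent, since then $\mathcal{S}_w(v)$ would be empty.

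For (iii), the faces of $\link_v$ are the subsets of the sets $\sigma\setminus I_0$, so the maximum face size is $\max(|\sigma|-|I_0|)$. The bound $|\sigma|\le d$ comes from the linear independence of the columns in $\sigma$ together with $\rank A=d$. For (iv), a top-dimensional pair has $|\sigma|=d$, which attains the bound. Therefore $\dim\link_v+1=\max|\tau|=d-|I_0|$.
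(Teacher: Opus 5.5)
Your proof is correct and follows the paper's route. Parts (i), (iii) and (iv) come from the linear independence of the columns indexed by each support $\sigma$ together with $I_0\subseteq\sigma$, and (ii) comes from reading the decomposition in \Cref{lem:local-standard-pair-decomposition} as the Stanley--Reisner decomposition of $I_{\link_v}$; your squarefree-monomial membership check just spells this out. The only difference is that you get the independence by citing that supports of standard pairs are faces of the regular triangulation $\Delta_w$. The paper instead proves it directly: a nonzero $u\in\ker_{\ZZ}(A)$ with $\supp(u)\subseteq\sigma$ would, by genericity of $w$, give $\partial^{u_+}\in\inw(I_A)$ with $\supp(u_+)\subseteq\sigma$, which contradicts the standard-pair property.
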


\begin{proof}
We first prove that the columns indexed by every support $\sigma$ occurring
in $\mathcal{S}_w(v)$ are linearly independent.  If not, there is a nonzero
$u\in\ker_{\ZZ}(A)$ with $\supp(u)\subseteq\sigma$.  Write
$u=u_+-u_-$ with disjoint supports.  Since $w$ is generic, after replacing
$u$ by $-u$ if necessary, we may assume $w\cdot u_+>w\cdot u_-$.  Then $\partial^{u_+}\in\operatorname{in}_{w}(I_A)$.
Since $\operatorname{supp}(u_+)\subseteq\sigma$, the defining property of
the standard pair gives
$$
\partial^a\partial^{u_+}\notin\inw(I_A),
$$
a contradiction.  Thus $A_\sigma$ has independent columns.  Since
$I_0\subseteq\sigma$ by \Cref{lem:I0-in-support}, this proves (i).

By \Cref{lem:local-standard-pair-decomposition},
$$
M_\theta(v)
=
\bigcap_{(a,\sigma)\in\mathcal{S}_w(v)}
\langle\theta_j\mid j\notin\sigma\rangle.
$$
Because $I_0\subseteq\sigma$, the condition $j\notin\sigma$ is equivalent to
$j\in V\setminus(\sigma\setminus I_0)$.  The link is the downward closure of the sets $\sigma\setminus I_0$.
Consequently, the displayed decomposition is exactly the
Stanley--Reisner prime decomposition of $I_{\operatorname{link}_v}$.  This proves (ii).

Every face $\tau$ of the link is contained in some
$\sigma\setminus I_0$, and each $\sigma\setminus I_0$ is itself a face.
Therefore
$$
\max_{\tau\in\link_v}|\tau|
=
\max_{(a,\sigma)\in\mathcal{S}_w(v)}(|\sigma|-|I_0|).
$$
The first paragraph gives $|\sigma|\leq d$, proving (iii).  If a
top-dimensional pair occurs, then some $|\sigma|=d$, and equality follows.
\end{proof}

\subsection{Elimination of the negative-support variables}

Write
$$
k=|I_0|,
\quad
I_0=\{i_1,\ldots,i_k\}.
$$

\begin{lemma}
\label{lem:euler-elimination}
The generators of $\langle A\theta\rangle$ can be chosen in the form
$$
\theta_{i_1}-L_1(\theta_V),\ldots,
\theta_{i_k}-L_k(\theta_V),
\ell_1(\theta_V),\ldots,
\ell_{d-k}(\theta_V),
$$
where $\theta_V=(\theta_j)_{j\in V}$ and every $L_p$ and $\ell_q$ is a
linear form in the variables indexed by $V$.
\end{lemma}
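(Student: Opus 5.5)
The plan is to derive the normal form from \Cref{thm:link-main-properties}(i) by Gaussian elimination on the $d$ linear forms $(A\theta)_1,\ldots,(A\theta)_d$. The ideal $\langle A\theta\rangle$ is generated by these forms, and any invertible $\CC$-linear recombination of them generates the same ideal. So it suffices to find an invertible matrix $E\in GL_d(\CC)$ such that the rows of $EA\theta$ have the stated shape.

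First, reorder the columns of $A$ as $(A_{I_0}\mid A_V)$, so that
$$
A\theta=A_{I_0}\theta_{I_0}+A_V\theta_V .
$$
By \Cref{thm:link-main-properties}(i), the $d\times k$ matrix $A_{I_0}$ has rank $k$. Hence we can choose $E\in GL_d(\CC)$ with
$$
EA_{I_0}=\begin{pmatrix}I_k\\ 0\end{pmatrix}.
$$
For example, extend the columns of $A_{I_0}$ to a basis of $\CC^d$ and take $E$ to be the inverse of the resulting matrix.

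Next, write
$$
EA_V=\begin{pmatrix}-C\\ D'\end{pmatrix},
$$
where $C$ has $k$ rows and $D'$ has $d-k$ rows. The first $k$ rows of $EA\theta$ are then $\theta_{i_p}-L_p(\theta_V)$ with $L_p=(C\theta_V)_p$. The last $d-k$ rows are $\ell_q(\theta_V)=(D'\theta_V)_q$, and they involve only the variables indexed by $V$.

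Since $E$ is invertible, these $d$ forms generate $\langle A\theta\rangle$. This gives the claimed form.

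There is no real obstacle here. The only substantive input is the linear independence of the columns indexed by $I_0$, which was already established in \Cref{thm:link-main-properties}(i); without it the eliminated block could not be made the identity.

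It is worth recording, for later use, that the $\ell_q$ are uniquely determined up to an invertible linear change. They span the space of linear forms in $\langle A\theta\rangle\cap\CC[\theta_V]_1$, and the $\ell_q$ are linearly independent, because the rows of $EA$ are independent when $A$ has rank $d$. The $\ell_q$ may fail to be nonzero modulo $I_{\link_v}$; this does not affect the lemma.
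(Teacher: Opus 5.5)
Your proposal is correct and follows the paper's proof. Both use \Cref{thm:link-main-properties}(i) to get $\rank A_{I_0}=k$, choose an invertible $d\times d$ matrix sending $A_{I_0}$ to $\begin{pmatrix}I_k\\0\end{pmatrix}$, and read off the first $k$ rows as the eliminations $\theta_{i_p}-L_p(\theta_V)$ and the remaining $d-k$ rows as the $\ell_q$; your extra remarks, that an invertible recombination preserves the ideal and that the $\ell_q$ span the degree-one part of $\langle A\theta\rangle\cap\CC[\theta_V]$, are accurate but not needed for the lemma.
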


\begin{proof}
By \Cref{thm:link-main-properties}(i), $\rank A_{I_0}=k$.  After reordering
columns, suppose $I_0=\{1,\ldots,k\}$.  There is
$U\in\operatorname{GL}_d(\CC)$ such that
$$
UA_{I_0}=\begin{pmatrix}I_k\\0\end{pmatrix}.
$$
The first $k$ rows of $UA\theta$ eliminate the variables indexed by $I_0$,
and the remaining rows involve only $\theta_V$.
\end{proof}

\begin{proposition}
\label{prop:stanley-reisner-realization}
Under $\theta_j\mapsto x_j$ for $j\in V$, the linear forms from
\Cref{lem:euler-elimination} define elements
$$
\ell_1,\ldots,\ell_{d-|I_0|}\in\CC[\link_v]
$$
and there is a degree-preserving isomorphism
\begin{equation}
T/Q_v
\cong
\CC[\link_v]/(\ell_1,\ldots,\ell_{d-|I_0|}).
\label{eq:sr-realization}
\end{equation}
If a top-dimensional standard pair occurs, then
$$
d-|I_0|=\dim\CC[\link_v]=\dim\link_v+1.
$$
\end{proposition}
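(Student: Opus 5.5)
The plan is to compute $T/Q_v$ in two stages: first quotient by the Euler relations that eliminate the $I_0$-variables, then identify what remains with the Stanley--Reisner ring of $\link_v$.

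By \Cref{lem:euler-elimination}, choose generators of $\langle A\theta\rangle$ of the form $\theta_{i_p}-L_p(\theta_V)$ for $p=1,\ldots,k$ and $\ell_q(\theta_V)$ for $q=1,\ldots,d-k$. Then
$$
Q_v=\langle\theta_{i_p}-L_p(\theta_V)\rangle+\langle\ell_q(\theta_V)\rangle+M_\theta(v).
$$
Consider the surjective graded $\CC$-algebra map $\pi:T\to\CC[\theta_j\mid j\in V]$ sending $\theta_{i_p}\mapsto L_p(\theta_V)$ and fixing $\theta_j$ for $j\in V$. Its kernel is exactly $\langle\theta_{i_p}-L_p(\theta_V)\mid p=1,\ldots,k\rangle$, by the usual substitution argument in a polynomial ring. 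Hence
$$
T/Q_v\cong\CC[\theta_V]/\bigl(\pi(M_\theta(v))+\langle\ell_1,\ldots,\ell_{d-k}\rangle\bigr).
$$

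The key point is that $\pi$ fixes every generator of $M_\theta(v)$. Each generator is $\prod_{j\in G^{(i)}}\theta_j$, and $G^{(i)}=I_{-g^{(i)}}\setminus I_0\subseteq V$, so these monomials involve only the $V$-variables. Therefore $\pi(M_\theta(v))$ is the ideal of $\CC[\theta_V]$ generated by the same monomials, namely $J_w(v)$ under $\theta_j\mapsto x_j$. By \Cref{thm:link-main-properties}(ii), this ideal equals $I_{\link_v}$. Thus the $\ell_q$ define elements of $\CC[\link_v]$, and the isomorphism \eqref{eq:sr-realization} follows. It is degree-preserving because $\pi$ is graded and every relation involved is homogeneous.

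The one place needing care is the identification of $\pi(M_\theta(v))$ with $I_{\link_v}$ as ideals of $\CC[\theta_V]$, rather than merely inside $T$. This is exactly the content of \Cref{thm:link-main-properties}(ii), which was stated in the variables $x_j$, $j\in V$. So the work is to check that $G^{(i)}\cap I_0=\emptyset$, which holds by definition.

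For the final claim, the Krull dimension of a Stanley--Reisner ring is the maximal face cardinality, so $\dim\CC[\link_v]=\dim\link_v+1$. \Cref{thm:link-main-properties}(iv) then gives $\dim\link_v+1=d-|I_0|$ whenever a top-dimensional standard pair occurs.
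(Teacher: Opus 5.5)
Your proposal is correct and follows essentially the same route as the paper. You eliminate the $I_0$-variables using the Euler relations from \Cref{lem:euler-elimination}, observe that every $G^{(i)}\subseteq V$ so the monomial generators of $M_\theta(v)$ survive unchanged, identify them with $I_{\link_v}$ via \Cref{thm:link-main-properties}(ii), and deduce the dimension statement from \Cref{thm:link-main-properties}(iv). You also make explicit the substitution homomorphism and the fact $\dim\CC[\Delta]=\dim\Delta+1$, which the paper uses without comment.
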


\begin{proof}
The relations $\theta_{i_p}=L_p(\theta_V)$ eliminate the variables indexed
by $I_0$.  Since every $G^{(i)}$ is contained in $V$, the remaining monomial
relations are $M_\theta(v)$.  By \Cref{thm:link-main-properties}(ii),
$$
\CC[\theta_j\mid j\in V]/M_\theta(v)
\cong
\CC[\link_v].
$$
The remaining Euler relations are the $\ell_i$, which proves
\eqref{eq:sr-realization}.  The dimension statement follows from
\Cref{thm:link-main-properties}(iv).
\end{proof}

\begin{lemma}
\label{lem:artinian-quotient}
For every fake exponent $v$, the quotient in \eqref{eq:sr-realization} is a
finite-dimensional $\CC$-vector space.
\end{lemma}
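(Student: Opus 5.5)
We must show $\CC[\link_v]/(\ell_1,\ldots,\ell_{d-k})$ is Artinian. Via \eqref{eq:sr-realization} this is the same as showing $T/Q_v$ is Artinian. Since $Q_v$ is homogeneous, this amounts to showing that the zero set of $Q_v$ in $\CC^n$ is the origin. We argue through the Stanley--Reisner side, where the zero set of $I_{\link_v}$ is the union of coordinate subspaces $\CC^{\tau}$ over the facets $\tau$ of $\link_v$.

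By \Cref{lem:local-standard-pair-decomposition} (equivalently, before eliminating $I_0$), the zero set of $Q_v$ in the $\theta$-coordinates is
$$
\bigcup_{(a,\sigma)\in\mathcal{S}_w(v)}\{\theta\in\CC^n\mid \theta_j=0\ (j\notin\sigma),\ A\theta=0\}.
$$
A point of the $\sigma$-component is a vector $\theta$ supported in $\sigma$ with $A_\sigma\theta_\sigma=0$. The first paragraph of the proof of \Cref{thm:link-main-properties} shows that the columns of $A_\sigma$ are linearly independent for every $(a,\sigma)\in\mathcal{S}_w(v)$. Hence $\theta_\sigma=0$, and each component is just the origin.

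So the variety of $Q_v$ is $\{0\}$. By the homogeneous Nullstellensatz, $\sqrt{Q_v}$ is the irrelevant ideal, so some power of $\langle\theta_1,\ldots,\theta_n\rangle$ lies in $Q_v$. Therefore $T/Q_v$ vanishes in large degrees and is finite-dimensional, and via \eqref{eq:sr-realization} so is the Stanley--Reisner quotient.

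The independence argument for the supports $\sigma$ is the only substantive point, and it is already available from \Cref{thm:link-main-properties}; the rest is routine. If one prefers to stay on the link side, the same computation can be done there. The zero set of $I_{\link_v}+(\ell)$ is the union over facets $\tau=\sigma\setminus I_0$ of the sets $\{x\in\CC^\tau\mid \ell(x)=0\}$. A point $x_\tau$ in such a set extends, via $\theta_{i_p}=L_p(x)$, to a vector supported in $\sigma$ with $A\theta=0$. That vector must vanish, so $x_\tau=0$.

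The one thing to check is that the extension respects the support condition. Since $I_0\subseteq\sigma$ by \Cref{lem:I0-in-support}, the extended vector is indeed supported in $\sigma$.
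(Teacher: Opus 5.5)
Your proof is correct and is essentially the paper's: both show that the affine zero set is the origin using the linear independence of the columns of $A_\sigma$ (from the first paragraph of the proof of \Cref{thm:link-main-properties}), and both then conclude with the Nullstellensatz. The paper works only on the link side, where your extension step appears as the claim $\rank B'_\tau=|\tau|$ for the row-reduced block $B'$. Your main route instead computes $V(Q_v)$ directly in $T$ before any elimination and then transfers finiteness through \eqref{eq:sr-realization}.
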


\begin{proof}
After reordering columns, write
$$
UA
=
\begin{pmatrix}
I_k&C\\
0&B'
\end{pmatrix}
$$
as in \Cref{lem:euler-elimination}.  The forms
$\ell_1,\ldots,\ell_{d-k}$ are the rows of $B'\theta_V$.

For a face $\tau\in\link_v$, let $B'_\tau$ be the submatrix formed by the
columns indexed by $\tau$.  We claim that
$$
\rank B'_\tau=|\tau|.
$$
Choose $(a,\sigma)\in\mathcal{S}_w(v)$ with
$I_0\cup\tau\subseteq\sigma$.  The proof of
\Cref{thm:link-main-properties}(i) shows that the columns of
$A_{I_0\cup\tau}$ are linearly independent.  If $B'_{\tau}c=0$, then
$$
UA_{\tau}c=(C_{\tau}c,0)^T
$$
lies in the span of $UA_{I_0}$.
The linear independence of the columns of $A_{I_0\cup\tau}$ therefore
forces $c=0$.

Now let
$$
z\in V\bigl(I_{\link_v}+(\ell_1,\ldots,\ell_{d-k})\bigr)
$$
and put $\tau=\supp(z)$.  Since $z\in V(I_{\link_v})$, the support $\tau$
is a face.  The equations $\ell_i(z)=0$ give $B'_\tau z_\tau=0$, and the
claim implies $z=0$.  Hence
$$
V\bigl(I_{\link_v}+(\ell_1,\ldots,\ell_{d-k})\bigr)=\{0\}.
$$
Hilbert's Nullstellensatz gives
$$
\sqrt{I_{\link_v}+(\ell_1,\ldots,\ell_{d-k})}
=
\langle x_j\mid j\in V\rangle.
$$
Thus a power of the homogeneous maximal ideal lies in the defining ideal.
It follows that the quotient is finite-dimensional.
\end{proof}

\subsection{The general Artinian Hilbert series}

\begin{definition}
\label{def:artinian-hilbert-series}
For a fake exponent $v$, define
$$
\mathcal{A}_{w,v}
=
\CC[\link_v]/(\ell_1,\ldots,\ell_{d-|I_0|})
$$
and
$$
H_{w,v}(t)=\Hilb(\mathcal{A}_{w,v},t).
$$
By \Cref{lem:artinian-quotient}, this Hilbert series is a polynomial.  Define
$$
\fdeg_w(v)=\deg H_{w,v}(t)
$$
and
$$
\frank_w(v)=\dim_{\CC}Q_v^{\perp}.
$$
\end{definition}

The row reduction used to obtain the $\ell_i$ is not unique, but
\eqref{eq:sr-realization} identifies every resulting quotient with the fixed
graded algebra $T/Q_v$.  Hence $H_{w,v}(t)$ is independent of all choices.

\begin{theorem}[General Artinian Hilbert-series theorem]
\label{thm:general-artinian-hilbert-series}
For every fake exponent $v$,
$$
\sum_{q\geq0}\dim_{\CC}(Q_v^{\perp})_q t^q
=
H_{w,v}(t).
$$
Consequently,
$$
\max\{q\geq0\mid (Q_v^{\perp})_q\neq0\}
=
\fdeg_w(v)
$$
and
$$
\frank_w(v)=H_{w,v}(1).
$$
\end{theorem}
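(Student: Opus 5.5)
The plan is to chain together the results already established. First, apply \Cref{prop:pairing-hilbert} to the homogeneous ideal $Q=Q_v\subseteq T$. This is legitimate because $Q_v$ is generated by the linear forms $A\theta$ and by the squarefree monomials generating $M_\theta(v)$, so it is homogeneous in the centered variables $\theta_j$. The proposition gives
$$
\sum_{q\geq0}\dim_{\CC}(Q_v^{\perp})_q t^q=\Hilb(T/Q_v,t).
$$
Next, invoke the degree-preserving isomorphism \eqref{eq:sr-realization} of \Cref{prop:stanley-reisner-realization}. It identifies $T/Q_v$ with $\mathcal{A}_{w,v}$ as graded algebras, so $\Hilb(T/Q_v,t)=\Hilb(\mathcal{A}_{w,v},t)=H_{w,v}(t)$. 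The only point to check is that this isomorphism respects the grading: the substitutions $\theta_{i_p}\mapsto L_p(\theta_V)$ are linear, and $\theta_j\mapsto x_j$ preserves degree. Combining the two displays proves the main identity.

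The two consequences then follow from the fact, given by \Cref{lem:artinian-quotient}, that $H_{w,v}(t)$ is a polynomial. All its coefficients $\dim_{\CC}(Q_v^\perp)_q$ are nonnegative integers. Hence the largest $q$ with $(Q_v^\perp)_q\neq0$ is exactly the largest $q$ with a nonzero coefficient, which is $\deg H_{w,v}=\fdeg_w(v)$. The degree is well defined because the degree-$0$ coefficient equals $1$: $(Q_v)_0=0$, since $Q_v$ is generated in positive degrees, and the set $\{q\}$ is nonempty because it contains $q=0$.

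For the total dimension, only finitely many graded pieces of $Q_v^\perp$ are nonzero. So $\frank_w(v)=\dim_{\CC}Q_v^\perp=\sum_q\dim_{\CC}(Q_v^\perp)_q=H_{w,v}(1)$. This uses that $Q_v^\perp$ is the direct sum of its graded pieces, which holds because $Q_v$ is homogeneous: a polynomial $f$ is annihilated by the homogeneous generators of $Q_v$ if and only if each homogeneous component of $f$ is.

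I do not expect a genuine obstacle here. The only delicate point is this last bookkeeping about homogeneity: checking that $Q_v^\perp$ decomposes by degree and that the isomorphism in \eqref{eq:sr-realization} is graded. Both can be verified directly from the generators.
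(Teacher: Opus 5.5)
Your proposal is correct and follows the paper's proof exactly: apply \Cref{prop:pairing-hilbert} to the homogeneous ideal $Q_v$, then transport the Hilbert series through the graded isomorphism \eqref{eq:sr-realization}. Your additional bookkeeping for the two consequences spells out what the paper leaves implicit: finiteness via \Cref{lem:artinian-quotient}, the degree decomposition of $Q_v^{\perp}$, and $(Q_v)_0=0$. The last point holds because every $G^{(i)}$ is nonempty, since by \Cref{lem:local-standard-pair-decomposition} and $\mathcal{S}_w(v)\neq\emptyset$ the ideal $M_\theta(v)$ is proper.
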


\begin{proof}
By \Cref{prop:stanley-reisner-realization},
$$
\Hilb(T/Q_v,t)=H_{w,v}(t).
$$
Apply \Cref{prop:pairing-hilbert}.
\end{proof}

\begin{corollary}[Actual logarithmic degrees]
\label{cor:general-log-degree}
Assume that $B$ is a $\ZZ$-basis of $L$ and that
$P=m(s)P_B$.  Then
$$
\sum_{q\geq0}\dim_{\CC}C_v(q)t^q
=
H_{w,v}(t).
$$
In particular,
$$
\max\{q\geq0\mid C_v(q)\neq0\}=\fdeg_w(v)
$$
and
$$
\dim_{\CC}C_v=H_{w,v}(1).
$$
\end{corollary}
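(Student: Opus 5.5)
The corollary follows by chaining \Cref{thm:frobenius-leading-space} with \Cref{thm:general-artinian-hilbert-series}. Under the hypotheses that $B$ is a $\ZZ$-basis of $L$ and that $P=m(s)P_B$, \Cref{thm:frobenius-leading-space} shows that $v$ is an exponent and that $C_v\cong Q_v^{\perp}$ as graded vector spaces. Hence
$$
\dim_{\CC}C_v(q)=\dim_{\CC}(Q_v^{\perp})_q
$$
for every $q\geq0$. Multiplying by $t^q$, summing over $q$, and applying \Cref{thm:general-artinian-hilbert-series} gives the identity $\sum_{q}\dim_{\CC}C_v(q)t^q=H_{w,v}(t)$.

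The two consequences then follow from the polynomial identity. The coefficient of $t^q$ in $H_{w,v}(t)$ is nonzero exactly when $C_v(q)\neq0$. Since $H_{w,v}(t)$ is a polynomial by \Cref{lem:artinian-quotient}, the largest such $q$ is $\deg H_{w,v}(t)=\fdeg_w(v)$. For this to make sense, $H_{w,v}(t)$ must be nonzero, which holds because the degree-$0$ part of $\mathcal{A}_{w,v}$ is $\CC$. Evaluating at $t=1$ gives $\sum_q\dim_{\CC}C_v(q)=H_{w,v}(1)$. This sum equals $\dim_{\CC}C_v$ because $C_v=\bigoplus_{q}C_v(q)$, and the sum is finite.

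The only point that needs care is that the grading on $C_v$ by the polynomial degree in $\log x$ agrees with the grading transported from $Q_v^{\perp}$. The map $\lambda_B$ sends $\partial_{s_k}$ to a linear form in $\log x$, so it preserves degree. It is also injective on $\CC[\partial_s]$, since the linear forms $\sum_j b^{(k)}_j\log x_j$ are linearly independent because $B$ is a basis. This injectivity is what makes the isomorphism $C_v=\lambda_B(P_B^{\perp})\cong Q_v^{\perp}$ of \Cref{thm:frobenius-leading-space} a graded one, so the step is covered by that theorem.
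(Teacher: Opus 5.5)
Your proposal is correct and takes the same approach as the paper, whose proof simply combines \Cref{thm:frobenius-leading-space} with \Cref{thm:general-artinian-hilbert-series}. Your additional remarks are valid details that the paper leaves implicit: that $H_{w,v}(t)\neq0$ because the degree-$0$ part of $\mathcal{A}_{w,v}$ is $\CC$, and that $\lambda_B$ is injective and degree-preserving.
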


\begin{proof}
Combine \Cref{thm:frobenius-leading-space,thm:general-artinian-hilbert-series}.
\end{proof}

\section{The Cohen--Macaulay specialization}
\label{sec:cmcase}

\begin{theorem}[Cohen--Macaulay criterion]
\label{thm:cm-regular-sequence}
Assume that $\mathcal{S}_w(v)$ contains a top-dimensional standard pair and
that $\CC[\link_v]$ is Cohen--Macaulay.  Then
$$
\ell_1,\ldots,\ell_{d-|I_0|}
$$
form a regular sequence on $\CC[\link_v]$.
\end{theorem}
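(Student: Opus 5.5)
The plan is to show that the $\ell_i$ form a homogeneous system of parameters of $\CC[\link_v]$. The standard fact recalled in Section~\ref{sec:simplicial} then says that, since $\CC[\link_v]$ is Cohen--Macaulay, they form a regular sequence. Two inputs are needed: the number of forms equals the Krull dimension, and the quotient by them is Artinian.

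\emph{Count.} Since $\mathcal{S}_w(v)$ contains a top-dimensional standard pair, \Cref{prop:stanley-reisner-realization} (via \Cref{thm:link-main-properties}(iv)) gives
$$
d-|I_0|=\dim\link_v+1=\dim\CC[\link_v].
$$
Here we use that the Krull dimension of a Stanley--Reisner ring is one more than the dimension of the complex.

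\emph{Artinian quotient.} \Cref{lem:artinian-quotient} gives
$$
\sqrt{I_{\link_v}+(\ell_1,\ldots,\ell_{d-|I_0|})}=\langle x_j\mid j\in V\rangle,
$$
so $\CC[\link_v]/(\ell_1,\ldots,\ell_{d-|I_0|})$ has Krull dimension $0$. Hence $\dim\CC[\link_v]$ homogeneous linear forms cut the dimension down to zero, which is exactly the definition of a homogeneous system of parameters. Each $\ell_i$ is a linear form in $\theta_V$, and we regard it as an element of $\CC[\link_v]$ via $\theta_j\mapsto x_j$.

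\emph{Conclusion.} In a graded Cohen--Macaulay ring, every homogeneous system of parameters is a regular sequence (the fact quoted from \cite{MillerSturmfels}). Since all the $\ell_i$ lie in the irrelevant ideal, the order of the sequence does not matter.

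I expect no serious obstacle. The only subtle point is one of bookkeeping: the number of Euler forms must match the dimension exactly, and this is where the top-dimensional hypothesis enters. Without it, $d-|I_0|$ could exceed $\dim\CC[\link_v]$, and then the sequence cannot be regular: a regular sequence of length $r$ drops dimension by exactly $r$, while the dimension can drop by at most $\dim\CC[\link_v]$. A minor check is that some $\ell_i$ might vanish identically. This cannot happen in the top-dimensional case, since the $d-k$ forms of an h.s.o.p.\ in a ring of dimension $d-k$ must all be nonzero.
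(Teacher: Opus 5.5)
Your proposal is correct and follows essentially the same route as the paper's proof. Both arguments take the count $d-|I_0|=\dim\CC[\link_v]$ from the top-dimensional hypothesis, take finite length of the quotient from the Artinian lemma, conclude that the $\ell_i$ form a homogeneous system of parameters, and then invoke Cohen--Macaulayness. The paper additionally states explicitly that the quotient is nonzero; you cover this implicitly by noting that the $\ell_i$ lie in the irrelevant ideal.
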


\begin{proof}
By \Cref{prop:stanley-reisner-realization}, the number of forms is
$\dim\CC[\link_v]$.  By \Cref{lem:artinian-quotient}, their quotient is finite-dimensional. Since the defining
ideal is homogeneous and generated in positive degree, the quotient is
nonzero. Consequently, the forms constitute a homogeneous system of
parameters. Cohen--Macaulayness implies that every homogeneous system
of parameters is a regular sequence.
\end{proof}

\begin{corollary}[The $h$-polynomial formula]
\label{cor:h-polynomial-case}
Under the hypotheses of \Cref{thm:cm-regular-sequence},
$$
H_{w,v}(t)=h_{\link_v}(t).
$$
Consequently,
$$
\sum_{q\geq0}\dim_{\CC}(Q_v^{\perp})_q t^q
=h_{\link_v}(t)
$$
and
$$
\frank_w(v)=h_{\link_v}(1).
$$
If, in addition, $B$ is a $\ZZ$-basis of $L$ and $P=m(s)P_B$, then
$$
\sum_{q\geq0}\dim_{\CC}C_v(q)t^q
=h_{\link_v}(t).
$$
Thus
$$
\max\{q\mid C_v(q)\neq0\}=\deg h_{\link_v}(t),
\quad
\dim_{\CC}C_v=h_{\link_v}(1).
$$
\end{corollary}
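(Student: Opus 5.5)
The first step is to apply \Cref{thm:cm-regular-sequence}. It shows that $\ell_1,\ldots,\ell_{d-|I_0|}$ is a regular sequence of linear forms on $\CC[\link_v]$. By \Cref{prop:stanley-reisner-realization}, the length $d-|I_0|$ of this sequence equals $r=\dim\link_v+1$, which is exactly the exponent appearing in $\Hilb(\CC[\link_v],t)=h_{\link_v}(t)/(1-t)^r$. Formula \eqref{eq:h-artinian} then gives
$$
H_{w,v}(t)=\Hilb(\mathcal{A}_{w,v},t)=h_{\link_v}(t).
$$
If one wants to see \eqref{eq:h-artinian} directly rather than cite it, it follows by induction on $r$. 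For a linear nonzerodivisor $\ell$ on a graded module $N$, the exact sequence $0\to N(-1)\xrightarrow{\ell}N\to N/\ell N\to0$ gives $\Hilb(N/\ell N,t)=(1-t)\Hilb(N,t)$. After $r$ steps the factor $(1-t)^r$ cancels.

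Next, substituting this identity into \Cref{thm:general-artinian-hilbert-series} gives $\sum_q\dim_{\CC}(Q_v^{\perp})_qt^q=h_{\link_v}(t)$. Evaluating at $t=1$ gives $\frank_w(v)=H_{w,v}(1)=h_{\link_v}(1)$, since both sides are polynomials.

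Under the additional hypotheses that $B$ is a $\ZZ$-basis of $L$ and $P=m(s)P_B$, I would invoke \Cref{cor:general-log-degree} instead. This replaces $\dim(Q_v^{\perp})_q$ by $\dim C_v(q)$, which yields the generating-function identity for $C_v$. Comparing degrees of the two polynomials gives $\max\{q\mid C_v(q)\neq0\}=\deg h_{\link_v}(t)$, and evaluating at $t=1$ gives $\dim_{\CC}C_v=h_{\link_v}(1)$.

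None of these steps poses a real obstacle, since each is an application of an earlier result. The only point needing care is the matching of $d-|I_0|$ with the Krull dimension $r$, so that the division by $(1-t)^r$ cancels exactly. This matching is precisely what the top-dimensional hypothesis supplies through \Cref{thm:link-main-properties}(iv).
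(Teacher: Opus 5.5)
Your proposal is correct and matches the paper's proof: the regular sequence from \Cref{thm:cm-regular-sequence} has length $d-|I_0|=\dim\CC[\link_v]$, so \eqref{eq:h-artinian} gives $H_{w,v}(t)=h_{\link_v}(t)$. The remaining assertions then follow from \Cref{thm:general-artinian-hilbert-series} and \Cref{cor:general-log-degree}. Your inductive derivation of \eqref{eq:h-artinian} from the exact sequence for a linear nonzerodivisor is a harmless self-contained supplement to the citation the paper relies on.
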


\begin{proof}
Since the regular sequence has length $\dim\CC[\link_v]$, \eqref{eq:h-artinian} applies.  The remaining assertions follow from
\Cref{thm:general-artinian-hilbert-series} and \Cref{cor:general-log-degree}.
\end{proof}

\begin{corollary}[Shellability]
\label{cor:shellability}
Assume that $\mathcal{S}_w(v)$ contains a top-dimensional standard pair and
that $\link_v$ is pure shellable.  Then all conclusions of
\Cref{cor:h-polynomial-case} hold.
\end{corollary}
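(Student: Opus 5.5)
The plan is to reduce the statement to \Cref{cor:h-polynomial-case}: we only need that a pure shellable $\link_v$ has a Cohen--Macaulay Stanley--Reisner ring $\CC[\link_v]$. The hypothesis that $\mathcal{S}_w(v)$ contains a top-dimensional standard pair is assumed verbatim. So once Cohen--Macaulayness is checked, both hypotheses of \Cref{thm:cm-regular-sequence} hold, and every conclusion of \Cref{cor:h-polynomial-case} follows immediately.

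The key input is the standard fact, recalled in the Stanley--Reisner subsection, that a pure shellable complex is Cohen--Macaulay over any field, in particular over $\CC$ \cite{MillerSturmfels}. We apply it directly to $\Delta=\link_v$ on the vertex set $V$. Since $I_{\link_v}$ is the Stanley--Reisner ideal by \Cref{thm:link-main-properties}(ii), $\CC[\link_v]$ is exactly the face ring to which this fact refers.

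We should also check that the hypotheses are mutually consistent. Purity of $\link_v$ together with a top-dimensional pair forces every facet $\sigma\setminus I_0$ to have size $d-|I_0|$, by \Cref{thm:link-main-properties}(iii),(iv). This consistency is not needed for the argument, because \Cref{thm:cm-regular-sequence} only uses $\dim\CC[\link_v]=d-|I_0|$, and \Cref{prop:stanley-reisner-realization} already supplies that.

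The only possible obstacle is a degenerate case: $\link_v=\{\emptyset\}$, which occurs when $|I_0|=d$. Here $\CC[\link_v]=\CC$, which is trivially Cohen--Macaulay, there are no forms $\ell_i$, and $h_{\link_v}(t)=1$. So the statement holds in this case as well, and otherwise the proof is a direct citation.
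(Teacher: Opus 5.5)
Your proposal is correct and follows the paper's proof: you observe that a pure shellable complex is Cohen--Macaulay (the paper cites Bj\"orner--Wachs for this), so the hypotheses of \Cref{thm:cm-regular-sequence} hold and every conclusion of \Cref{cor:h-polynomial-case} follows. Your extra checks on purity consistency and on the degenerate case $\link_v=\{\emptyset\}$ are harmless but unnecessary, since the cited fact already covers them.
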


\begin{proof}
A pure shellable simplicial complex is Cohen--Macaulay
\cite{BjornerWachs}.
\end{proof}

\begin{remark}
The top-dimensionality assumption cannot be removed from the
$h$-polynomial formula.  In general,
$$
\dim\link_v+1\leq d-|I_0|,
$$
and the inequality can be strict for a fake exponent corresponding only to
embedded standard pairs.  In that case the full collection of
$d-|I_0|$ Euler forms cannot be a regular sequence when its length exceeds
$\dim\CC[\link_v]$.  The Artinian model and
\Cref{thm:general-artinian-hilbert-series}, however, remain valid.
\end{remark}

\begin{algorithm}[Computing logarithmic degrees]
\label{alg:computation}
For a fake exponent $v$, the graded dimensions can be computed as follows.
\begin{enumerate}[label=\arabic*.]
\item Compute $I_0=\nsupp(v)$ and all pairs in $\mathcal{S}_w(v)$, including
embedded pairs.  Form $\link_v$ by removing $I_0$ from the maximal supports.
\item Compute the sets $G^{(i)}=I_{-g^{(i)}}\setminus I_0$ from the reduced
Gr\"obner basis.  Their inclusion-minimal members are the minimal nonfaces of
$\link_v$.
\item Row-reduce the Euler relations as in
\Cref{lem:euler-elimination}.  This yields the linear forms
$\ell_1,\ldots,\ell_{d-|I_0|}$.
\item Compute
$$
H_{w,v}(t)
=
\Hilb\left(\CC[\link_v]/(\ell_1,\ldots,\ell_{d-|I_0|}),t\right).
$$
This gives the graded dimensions of $Q_v^{\perp}$ for every fake exponent.
\item If a top-dimensional pair occurs and $\CC[\link_v]$ is
Cohen--Macaulay, replace this quotient computation by the
$h$-polynomial $h_{\link_v}(t)$.
\item If $P=m(s)P_B$, the same polynomial gives the graded dimensions of the
actual leading logarithmic polynomial space $C_v$.
\end{enumerate}
\end{algorithm}

\section{Examples}
\label{sec:examples}

\subsection{A non-Cohen--Macaulay link}

\begin{example}
\label{ex:non-cm-link}
Let
$$
A
=
\begin{pmatrix}
1&1&1&1&1\\
0&0&1&1&1\\
1&0&1&0&3
\end{pmatrix},
\quad
w=(2,1,1,2,2).
$$
The reduced Gr\"obner basis is
$$
\left\{
\partial_4^2\partial_5-\partial_3^3,
\ \partial_1^2\partial_3-\partial_2^2\partial_5,
\ \partial_2\partial_4\partial_5-\partial_1\partial_3^2,
\ \partial_1\partial_4-\partial_2\partial_3
\right\},
$$
with initial monomials displayed on the left.  Hence
$$
\inw(I_A)
=
\langle
\partial_4^2\partial_5,
\partial_1^2\partial_3,
\partial_2\partial_4\partial_5,
\partial_1\partial_4
\rangle.
$$
Its standard pairs are
$$
(0,\ast,\ast,\ast,0),
\quad
(\ast,\ast,0,0,\ast),
\quad
(0,\ast,\ast,0,\ast),
\quad
(1,\ast,\ast,0,\ast),
\quad
(0,0,\ast,1,\ast).
$$

Take
$$
v=(0,0,-1,1,0),
\quad
Av=(0,0,-1)^T,
\quad
I_0=\{3\}.
$$
The corresponding standard pairs are
$$
(0,\ast,\ast,\ast,0)
\quad\text{and}\quad
(0,0,\ast,1,\ast),
$$
with supports $\{2,3,4\}$ and $\{3,5\}$.  Thus the facets of the link are
$$
\{2,4\},
\quad
\{5\}.
$$
Since the link consists of one edge and one isolated vertex, it is not
Cohen--Macaulay by Reisner's criterion.

The four Gr\"obner basis elements give
$$
G^{(1)}=\{4,5\},
\quad
G^{(2)}=\{1\},
\quad
G^{(3)}=\{2,5\},
\quad
G^{(4)}=\{1\}.
$$
Hence
$$
I_{\link_v}
=
\langle x_1,x_2x_5,x_4x_5\rangle
\subseteq\CC[x_1,x_2,x_4,x_5].
$$
The Euler ideal can be written as
$$
\langle A\theta\rangle
=
\langle
\theta_3+\theta_4+\theta_5,
\ell_1,
\ell_2
\rangle,
$$
where
$$
\ell_1=\theta_1+\theta_2,
\quad
\ell_2=\theta_1-\theta_4+2\theta_5.
$$
Therefore
$$
\mathcal{A}_{w,v}
\cong
\frac{\CC[x_1,x_2,x_4,x_5]}
{\langle
x_1,x_2x_5,x_4x_5,x_1+x_2,x_1-x_4+2x_5
\rangle}
\cong
\CC[x_5]/(x_5^2).
$$
Thus
$$
H_{w,v}(t)=1+t.
$$

On the other hand, the $f$-vector of the link is $(1,3,1)$, and
$$
h_{\link_v}(t)
=(1-t)^2+3t(1-t)+t^2
=1+t-t^2.
$$
Hence
$$
H_{w,v}(t)\neq h_{\link_v}(t).
$$
This is the precise point at which the Cohen--Macaulay specialization fails.

The Frobenius condition nevertheless holds.  Take
$$
b^{(1)}=(1,-1,-1,1,0)^T,
\quad
b^{(2)}=(-2,2,-1,0,1)^T.
$$
A direct negative-support computation gives
$$
\operatorname{NS}_w(v)
=
\bigl\{\{3\},\{2,3\},\{2,3,5\}\bigr\},
\quad
K=I_0=\{3\}.
$$
Thus $m(s)=1$ and $P=P_B$.  Moreover,
$$
P_B
=
\langle s_1-2s_2,\ s_1s_2,\ (-s_1+2s_2)s_2\rangle
=
\langle s_1-2s_2,s_2^2\rangle.
$$
Consequently,
$$
P_B^{\perp}
=
\CC\{1,2\partial_{s_1}+\partial_{s_2}\}.
$$
Since
$$
\lambda_B(2\partial_{s_1}+\partial_{s_2})
=
-3\log x_3+2\log x_4+\log x_5,
$$
we obtain
$$
C_v
=
\CC\{1,-3\log x_3+2\log x_4+\log x_5\}.
$$
The actual maximal logarithmic degree is $1$ and $\dim_{\CC}C_v=2$.
\end{example}

\subsection{A fake exponent supported only on an embedded pair}

\begin{example}
\label{ex:embedded-only}
Let
$$
A
=
\begin{pmatrix}
1&1&1&1&1&1\\
0&1&1&0&-1&-1\\
-1&-1&0&1&1&0
\end{pmatrix},
\quad
w=(2,3,8,1,13,5).
$$
The initial ideal is
$$
\begin{aligned}
\inw(I_A)=\langle
\partial_2\partial_5,
\partial_3\partial_6,
\partial_3\partial_5^2,
\partial_3^2\partial_5,
\partial_1\partial_5^2,
\partial_1\partial_3\partial_5,
\partial_1\partial_3^2,
\partial_1^2\partial_5,
\partial_2^2\partial_6^2,
\partial_1^2\partial_3
\rangle.
\end{aligned}
$$
Take
$$
v=(1,0,0,0,1,-2),
\quad
Av=(0,1,0)^T,
\quad
I_0=\{6\}.
$$
The only corresponding standard pair is
$$
(1,0,0,\ast,1,\ast),
$$
with support $\sigma=\{4,6\}$.  It is embedded, and there is no
corresponding top-dimensional pair.  Therefore
$$
\link_v=\{\emptyset,\{4\}\}
$$
and
$$
\max_{\tau\in\link_v}|\tau|=1<2=d-|I_0|.
$$

The sets $G^{(i)}$ obtained from the ten reduced Gr\"obner basis elements are
$$
\{2\},\{3\},\{3,5\},\{3\},\{5\},\{3\},\{3\},\{1\},\{2\},\{1,3\}.
$$
Their inclusion-minimal members are 
$$\{1\},\{2\},\{3\},\{5\}.
$$
It follows that
$$
\CC[\link_v]\cong\CC[x_4].
$$
The Euler ideal can be written as
$$
\langle A\theta\rangle
=
\langle
\theta_6-(\theta_2+\theta_3-\theta_5),
\ell_1,
\ell_2
\rangle,
$$
where
$$
\ell_1=\theta_1+2\theta_2+2\theta_3+\theta_4,
\quad
\ell_2=-\theta_1-\theta_2+\theta_4+\theta_5.
$$
Both forms map to $x_4$ in $\CC[\link_v]$.  Hence
$$
\mathcal{A}_{w,v}
\cong
\CC[x_4]/(x_4,x_4)
\cong
\CC
$$
and
$$
H_{w,v}(t)=1.
$$
Thus
$$
\frank_w(v)=1,
\quad
\fdeg_w(v)=0.
$$
The two Euler forms do not form a regular sequence: after quotienting by the
first, the image of the second is zero.  This example shows why the general
Artinian model is needed beyond the top-dimensional case.
\end{example}

\subsection{A one-dimensional Cohen--Macaulay link}

\begin{example}
\label{ex:path-link}
Continue with the matrix and weight of \Cref{ex:embedded-only}. Then
$$
v=(0,0,0,-1,0,0)
$$
is a fake exponent, and
$$
I_0=\operatorname{nsupp}(v)=\{4\}.
$$
The link has facets
$$
\{1,2\},\quad\{1,6\},\quad\{2,3\},\quad\{5,6\}.
$$
These facets form the path
$$
3-2-1-6-5.
$$
It is connected and has no isolated vertices; hence its Stanley--Reisner ring
is Cohen--Macaulay by \Cref{prop:graph-cm}.  Its Stanley--Reisner ideal is
$$
I_{\link_v}
=
\langle
x_1x_3,x_1x_5,x_2x_5,x_2x_6,x_3x_5,x_3x_6
\rangle.
$$

The Euler ideal can be written as
$$
\langle A\theta\rangle
=
\langle
\theta_4-(\theta_1+\theta_2-\theta_5),
\ell_1,
\ell_2
\rangle,
$$
where
$$
\ell_1=\theta_2+\theta_3-\theta_5-\theta_6,
\quad
\ell_2=2\theta_1+2\theta_2+\theta_3+\theta_6.
$$
The $f$-vector is $(1,5,4)$, and hence
$$
h_{\link_v}(t)
=(1-t)^2+5t(1-t)+4t^2
=1+3t.
$$
Therefore
$$
H_{w,v}(t)=1+3t,
\quad
\frank_w(v)=4,
\quad
\fdeg_w(v)=1.
$$
A direct quotient computation gives a graded basis
$$
1,x_3,x_5,x_6.
$$

In this example $I_0$ is contained in the intersection of the supports of all
standard pairs of $\inw(I_A)$.  The global standard-pair criterion of
\cite{NagamineCore} gives $K=I_0$, and hence $P=P_B$.  Consequently,
$$
\sum_{q\geq0}\dim_{\CC}C_v(q)t^q=1+3t.
$$
Thus the actual maximal logarithmic degree is $1$ and
$\dim_{\CC}C_v=4$.
\end{example}

\subsection{A two-dimensional Cohen--Macaulay link}

\begin{example}
\label{ex:two-dimensional-link}
Let
$$
A
=
\begin{pmatrix}
1&1&1&1&1\\
-1&1&1&-1&0\\
-1&-1&1&1&0
\end{pmatrix},
\quad
\beta=(1,0,0)^T,
\quad
w=(1,1,1,1,0).
$$
Then
$$
v=(0,0,0,0,1)
$$
is the unique exponent and $I_0=\emptyset$.  The reduced Gr\"obner basis is
$$
\partial_1\partial_3-\partial_5^2,
\quad
\partial_2\partial_4-\partial_5^2,
$$
with initial monomials displayed on the left.  Hence
$$
I_{\link_v}=\langle x_1x_3,x_2x_4\rangle.
$$
The facets are
$$
\{1,2,5\},\quad\{1,4,5\},\quad\{2,3,5\},\quad\{3,4,5\}.
$$
Thus the link is the cone with apex $5$ over the four-cycle
$$
1-2-3-4-1.
$$
It is two-dimensional and Cohen--Macaulay. Its $f$-vector is
$$
(1,5,8,4).
$$
Hence
$$
h_{\operatorname{link}_v}(t)
=
(1-t)^3+5t(1-t)^2+8t^2(1-t)+4t^3
=
1+2t+t^2.
$$

Let
$$
b^{(1)}=(1,0,1,0,-2)^T,
\quad
b^{(2)}=(0,1,0,1,-2)^T.
$$
Then
$$
B=\{b^{(1)},b^{(2)}\}
$$
is a $\mathbb{Z}$-basis of $L$.
Since $K=I_0=\varnothing$, we have $P=P_B$, where
$$
P_B=\langle s_1^2,s_2^2\rangle.
$$
Thus
$$
P_B^{\perp}
=
\CC\{1,\partial_{s_1},\partial_{s_2},\partial_{s_1}\partial_{s_2}\}.
$$
Consequently,
$$
\sum_{q\geq0}\dim_{\CC}C_v(q)t^q=1+2t+t^2.
$$
The actual maximal logarithmic degree is $2$ and $\dim_{\CC}C_v=4$.
Writing
$$
L_1=\log x_1+\log x_3-2\log x_5,
\quad
L_2=\log x_2+\log x_4-2\log x_5,
$$
a graded basis is
$$
1,\quad L_1,\quad L_2,\quad L_1L_2.
$$
\end{example}

\section*{Use of AI}

In preparing this paper, the author used ChatGPT-5.6 Sol in the
following ways:
\begin{itemize}
\item Some ideas incorporated into the proofs in Sections 4 and 5 were
suggested by ChatGPT-5.6 Sol.
\item ChatGPT-5.6 Sol was used to improve the exposition and to help
identify grammatical errors and possible mathematical errors.
\end{itemize}
All AI-generated suggestions and text incorporated into the paper were
carefully checked, independently verified, and revised by the author.
The author takes full responsibility for the correctness of all mathematical
statements and proofs and for the final text of this paper.

\subsection*{\normalsize Mao NAGAMINE}
\noindent
Department of Mathematics \\
Graduate School of Science\\
Hokkaido University\\
Sapporo 060-0810, JAPAN\\
e-mail: nagamine.mao.d9@elms.hokudai.ac.jp


\begin{thebibliography}{99}


\bibitem{BjornerWachs}
A.~Bj\"orner and M.~L.~Wachs,
\emph{Shellable nonpure complexes and posets. II},
Trans. Amer. Math. Soc. \textbf{349} (1997), no.~10, 3945--3975.

\bibitem{GKZbook}
I.~M.~Gel'fand, M.~M.~Kapranov, and A.~V.~Zelevinsky,
\emph{Discriminants, Resultants, and Multidimensional Determinants},
Birkh\"auser, Boston, 1994.

\bibitem{Hotta}
R.~Hotta,
\emph{Equivariant $D$-modules},
in: Proceedings of the ICPAM Spring School in Wuhan, 1991,
available as arXiv:math/9805021.

\bibitem{MillerSturmfels}
E.~Miller and B.~Sturmfels,
\emph{Combinatorial Commutative Algebra},
Graduate Texts in Mathematics, vol.~227, Springer, New York, 2005.

\bibitem{NagamineCore}
M.~Nagamine,
\emph{$A$-hypergeometric Series with Parameters in the Core},
Math. J. Okayama Univ., to appear.

\bibitem{OkuyamaSaitoII}
G.~Okuyama and M.~Saito,
\emph{Logarithmic $A$-hypergeometric series II},
Beitr. Algebra Geom. \textbf{64} (2023), no.~4, 1057--1086.

\bibitem{Saito2002}
M.~Saito,
\emph{Logarithm-free $A$-hypergeometric series},
Duke Math. J. \textbf{115} (2002), no.~1, 53--73.

\bibitem{SaitoLog}
M.~Saito,
\emph{Logarithmic $A$-hypergeometric series},
Int. J. Math. \textbf{31} (2020), no.~13, 2050110.

\bibitem{SST}
M.~Saito, B.~Sturmfels, and N.~Takayama,
\emph{Gr\"obner Deformations of Hypergeometric Differential Equations},
Algorithms and Computation in Mathematics, vol.~6,
Springer, Berlin, 2000.

\bibitem{SchulzeWalther}
M.~Schulze and U.~Walther,
\emph{Irregularity of hypergeometric systems via slopes along coordinate subspaces},
Duke Math. J. \textbf{142} (2008), no.~3, 465--509.


\end{thebibliography}
\end{document}